\date{}
\begin{document}

\author{Chlo\'e Perin and Rizos Sklinos}
\title{On the (non) superstable part of the free group}
\maketitle

\begin{abstract} 
In this short note we prove that a definable set $X$ over $\F_n$ is superstable only if $X(\F_n)=X(\F_{\omega})$.
\end{abstract}

\section{Introduction}
In this note we are concerned with the superstable part of the free group. Although Poizat showed in \cite{PoizatGenericAndRegular} that 
the theory of non abelian free groups is not superstable, 
it is still interesting to recover the superstable part of this theory, i.e. the 
definable sets that can be ranked under Shelah rank. 

If $\phi(\bar{x})$ is a first order formula over the free group $\F_n$ of rank $n$, we denote by $\phi(\F_n)$ the solution set of 
$\phi(\bar{x})$ in $\F_n$. We note that the following sequence 
$\F_2\prec\F_3\prec\ldots\prec\F_n\prec\ldots$ forms an elementary chain (see \cite{Sel6}, \cite{KharlampovichMyasnikov}), 
and consequently, for each $n\geq 2$, $\F_n$ is an elementary subgroup of $\F_{\omega}$. We conjecture:

\begin{conjIntro}
Let $\phi(\bar{x})$ be a first order formula over $\F_n$. Then $\phi(\bar{x})$ is superstable 
if and only if $\phi(\F_n)=\phi(\F_{\omega})$.
\end{conjIntro}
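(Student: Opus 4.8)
The plan is to establish the two implications by quite different means: the forward implication (``superstable $\Rightarrow \phi(\F_n)=\phi(\F_\omega)$'') by extracting an explicit witness to non-superstability from any ``new'' solution, and the converse by invoking the geometric structure theory of definable sets over free groups. Throughout, recall that $\phi$ is superstable precisely when its Shelah rank is ordinal-valued, and that non-superstability is witnessed by a single complete type containing $\phi$ of Shelah rank $\infty$ (equivalently, by an infinite forking chain, or by a type of infinite weight, concentrating on $\phi$).

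For the forward implication I would argue contrapositively. Suppose $\phi(\F_n)\subsetneq\phi(\F_\omega)$. Since $\F_n\prec\F_\omega$ we have $\phi(\F_n)=\phi(\F_\omega)\cap\F_n^{|\bar x|}$, so there is a solution $\bar a\in\phi(\F_\omega)$ at least one coordinate of which lies outside $\F_n$; writing $\F_\omega=\F_n*\langle e_{n+1},e_{n+2},\dots\rangle$, this coordinate involves some $e_i$ with $i>n$. The point is that such a coordinate is \emph{not algebraic} over $\F_n$: the automorphisms of $\F_\omega$ fixing $\F_n$ pointwise and permuting the remaining free generators move it to infinitely many distinct elements of the same type over $\F_n$. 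Using these automorphisms I would push the support of $\bar a$ into fresh blocks of generators to produce solutions of $\phi$ over $\F_n$ reproducing Poizat's non-superstability configuration; concretely, one exhibits from $\bar a$ a type over $\F_n$ that, being non-orthogonal to the generic type of $\F_\omega$, inherits the infinite forking chain produced in \cite{PoizatGenericAndRegular}. This forces a type containing $\phi$ of Shelah rank $\infty$, hence non-superstability. The technical heart here is checking that the configuration extracted from a single non-algebraic coordinate genuinely forks, rather than merely producing distinct independent copies; this is exactly where the self-similarity of $\F_\omega$ under generator shifts and the triviality of the algebraic closure of the free factor $\F_n$ are used.

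For the converse — the conjectural and substantially harder direction — I would assume $\phi(\F_n)=\phi(\F_\omega)$ and aim to bound the Shelah rank of $\phi$. The obstacle is immediate: superstability concerns types realized in the monster model $\mathfrak C\succ\F_\omega$, whereas the hypothesis only controls solutions inside the countable model $\F_\omega$. The natural route is through Sela's description of the solution set of a system of equations and inequations by finitely many \emph{towers} (the Makanin--Razborov machinery underlying the stability of the free group): one would argue that the equality $\phi(\F_n)=\phi(\F_\omega)$ forces every tower attached to $\phi$ to be rigid, i.e.\ to admit no continuous modulus of deformation, since any such modulus would produce, after specialising into sufficiently large free factors, solutions in $\F_\omega\setminus\F_n$. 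Rigidity of the towers should in turn bound the rank, each non-rigid level being responsible for one unbounded direction of forking.

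The main obstacle, and the reason the statement is recorded as a conjecture, is precisely this last translation: turning the combinatorial fact that no new solutions appear upon enlarging the free group into genuine control of forking in $\mathfrak C$. Even granting the structure theory, one must rule out that $\phi(\mathfrak C)$ supports an infinite forking chain although $\phi(\F_\omega)=\phi(\F_n)$ — that is, one must show that the rank detected by the monster is faithfully reflected by the passage $\F_n\rightsquigarrow\F_\omega$. Establishing this faithfulness, presumably by showing that any infinite forking chain concentrating on $\phi$ can be pushed, via the homogeneity of the free group and test-sequence arguments, to produce a solution in $\F_\omega$ outside $\F_n$, is the crux; it is the exact converse of the witness-building of the forward implication, and I expect it to require the full strength of the geometric analysis rather than the soft model theory that suffices for the ``only if'' part.
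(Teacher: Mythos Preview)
First, note that the statement is recorded in the paper as a \emph{conjecture}: only the forward direction (superstable $\Rightarrow \phi(\F_n)=\phi(\F_\omega)$) is actually proved there, and the converse is left open. Your discussion of the converse is therefore appropriate as a program, and there is nothing in the paper to compare it to.

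For the forward direction your sketch has the right shape --- argue contrapositively, take $w(\bar a,e_{n+1})\in\phi(\F_{n+1})\setminus\F_n$, and exploit that this element is tied to the generic type --- but the step ``non-orthogonal to the generic, hence inherits Poizat's infinite forking chain'' is where the real content lies, and you have not supplied it. Non-orthogonality to a type of $R^\infty$-rank $\infty$ does not by itself force every formula in the other type to have $R^\infty=\infty$; one needs to manufacture an actual descending forking chain of formulas below $\phi$. The paper does this very concretely: it pushes the explicit chain $\psi_i(z,b_1,\dots,b_i)$ (built from the indiscernible sequence $b_i=b_{i-1}e_ie_{i+1}^2$) through the word $w$, setting
\[
\phi_i(x,b_1,\dots,b_i):=\exists z\,\bigl(x=w(\bar a,z)\wedge\psi_i(z)\bigr),
\]
and then proves that $\phi_{i+1}$ forks over $\bar a,b_1,\dots,b_i$. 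That forking step is the heart of the argument and uses tools you do not mention: Sela's equationality of Diophantine formulas (to reduce an infinite intersection to a finite one), the Merzlyakov-type lemma producing a \emph{formal solution} over $\F_n*\F(\bar b)$, and a Kurosh/rank-of-free-factor computation to derive a contradiction from that formal solution. Your appeal to automorphisms permuting generators establishes non-algebraicity but not forking of the specific formulas; the paper's machinery is what closes that gap.

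In short: your outline is compatible with the paper's strategy at the level of intuition, but the paper's proof is substantially more concrete, and the ingredients it uses (equationality, formal solutions, free-product combinatorics) are exactly what is missing from your ``technical heart'' paragraph.
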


The main theorem of this short note is the left to right direction of the above conjecture.

\begin{thmIntro} \label{MainResult}
Let $\phi(\bar{x})$ be a first order formula over $\F_n$. If $\phi(\bar{x})$ is superstable 
then $\phi(\F_n)=\phi(\F_{\omega})$.
\end{thmIntro}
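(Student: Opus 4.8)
The plan is to argue by contraposition, transferring the non-superstability of the theory of the free group --- recalled in the introduction via \cite{PoizatGenericAndRegular} --- into $\phi$ as soon as $\phi$ acquires a solution in $\F_{\omega}$ not already in $\F_n$. Since $\F_n\prec\F_{\omega}$, for a tuple $\bar a$ from $\F_n$ we have $\F_n\models\phi(\bar a)$ iff $\F_{\omega}\models\phi(\bar a)$, so $\phi(\F_n)=\phi(\F_{\omega})\cap\F_n^{|\bar x|}$; hence $\phi(\F_n)\subseteq\phi(\F_{\omega})$ automatically, and equality fails exactly when some $\bar b\in\phi(\F_{\omega})$ has a coordinate outside $\F_n$. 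Fix such a $\bar b$; we must show $\phi$ is not superstable. Any automorphism of $\F_{\omega}$ fixing $\F_n$ pointwise preserves $\phi(\F_{\omega})$, because the parameters of $\phi$ lie in $\F_n$; applying one that merely permutes free generators, we may assume $x_{n+1}$ occurs in some coordinate $b_j$ of $\bar b$, and we fix $m>n$ so large that no generator $x_{m+1},x_{m+2},\dots$ occurs in $\bar b$.

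From $\bar b$ I would now build a single ``complicated'' solution of $\phi$. For $r\ge 1$ let $\sigma_r\in\Aut(\F_{\omega})$ be the automorphism fixing every free generator except $x_{n+1}$, which it sends to $x_{n+1}\cdot\prod_{i=1}^{r}[x_{m+1},x_{m+1+i}]$ --- an automorphism since $x_{n+1}$ is replaced by itself times a word in the remaining generators. It fixes $\F_n$ pointwise, so $\bar b_r:=\sigma_r(\bar b)\in\phi(\F_{\omega})$. As $r\to\infty$ the tuples $\bar b_r$ leave every bounded region, so in a monster model $\mathfrak M\succ\F_{\omega}$ an accumulation point of $(\tp(\bar b_r/\F_{\omega}))_r$ is a complete type $q$ over $\F_{\omega}$ with $q\vdash\phi$; let $\bar b^{*}\models q$. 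Concretely, $\bar b^{*}$ is the image of $\bar b$ under the substitution sending $x_{n+1}$ to the ``nonstandard word'' $x_{n+1}\cdot\prod_{i\le\nu}[x_{m+1},x_{m+1+i}]$ for some nonstandard $\nu$, and fixing the other generators.

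It remains to show $q$ forks over every finite subset of $\F_{\omega}$, which contradicts superstability of $\phi$ since $q\vdash\phi$. Fix $k>m+1$. Over $\F_{k-1}$, the complicated factor of $\bar b^{*}$ decomposes as $P\cdot[x_{m+1},x_k]\cdot(\text{tail})$ with $P\in\F_{k-1}$ and the tail a product of commutators $[x_{m+1},x_{\ell}]$, $\ell>k$; so $\bar b^{*}$ genuinely involves $x_k$ over $\F_{k-1}$, and only through commutators with the parameter $x_{m+1}\in\F_{k-1}$ --- a conjugation-type relation, not an independent (``generic'') one. By the description of forking, equivalently of Lascar rank, in the free group, $\tp(\bar b^{*}/\F_k)$ is then a proper forking extension of $\tp(\bar b^{*}/\F_{k-1})$. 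Hence $q$ forks over $\F_{k-1}$ for all large $k$, and so over every finite subset of $\F_{\omega}$; therefore $\phi$ is not superstable.

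The hard part is this last step: showing that the commutator tail really produces forking at \emph{every} level of the tower, rather than --- as a careless factorisation of the word into a prefix in $\F_k$ and a ``generic'' suffix would suggest --- an independent extension from some point on. This is where one must invoke the stability-theoretic geometry of the free group (Sela's stability, the analysis of forking, Lascar rank and weight, and their compatibility with the embeddings $\F_k\prec\F_{k+1}$); everything else is formal. A minor point to be handled in passing is to ensure that the coordinate $b_j$ ``remembers'' the inserted complication --- it cannot be wholly cancelled, since $\bar b$ has a coordinate outside $\F_n$ while the inserted word lies in a free factor disjoint from the other letters of that coordinate.
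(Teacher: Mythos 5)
Your overall strategy (contrapose, and turn a witness of $\phi(\F_n)\neq\phi(\F_\omega)$ into a failure of superstability) is the same as the paper's, but the engine you propose for producing forking does not exist, and that is exactly where the content of the theorem lies. You construct the tuples $\bar b_r=\sigma_r(\bar b)$, pass to a limit type $q\vdash\phi$ over $\F_\omega$, and then assert that $\tp(\bar b^*/\F_k)$ forks over $\F_{k-1}$ for all large $k$ ``by the description of forking, equivalently of Lascar rank, in the free group''. No such description is available: the theory is stable but not superstable, forking over arbitrary parameter sets in $\F_\omega$ has no known characterization one could cite, and the heuristic ``the new generator enters only through commutators with a parameter, hence a conjugation-type, non-generic, relation, hence forking'' is not an argument --- you yourself flag the competing reading in which the commutator tail is an independent (non-forking) complication from some level on, and nothing in the proposal rules it out. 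So the proof has a genuine gap at its only non-formal step. (The surrounding formal steps are fine: the limit type contains $\phi$ by compactness, forking over each $\F_k$ does give forking over every finite subset by monotonicity, and from a complete type containing $\phi$ that forks over every finite set one extracts a descending chain by taking conjunctions of formulas of $q$, using the analogue of Remark \ref{Sub}.)

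Compare with what the paper actually does to certify forking: from a witness $w(\bar a,e_1)\in\phi(\F*\langle e_1\rangle)\setminus\F$ it builds explicit Diophantine formulas $\phi_i(x,b_1,\dots,b_i):=\exists z\,(x=w(\bar a,z)\wedge\psi_i(z))$, where the $\psi_i$ are the specific square-pattern formulas along the independent generic sequence $(b_i)$, and then proves that $(b_{i+m})_{m\geq1}$ witnesses forking of $\phi_{i+1}$ by a genuinely group-theoretic mechanism: equationality of Diophantine formulas (Theorem \ref{Equa}) reduces the putative consistency to a finite intersection, genericity plus Lemma \ref{GenericMerzlyakov} produces a Merzlyakov-type formal solution $\tilde y(x,\bar b)$, and the Kurosh/free-product computation of Claims I and II shows the formal solution cannot exist. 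If you want to salvage your route, you would have to supply a comparable certificate of forking for your commutator-tail type --- in effect redoing this equationality-plus-formal-solution analysis for your words --- at which point you are back to the paper's proof with a less convenient family of formulas (the paper's $\psi_i$ are chosen precisely so that the forking computation closes).
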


Note that Pillay proposed an alternative proof of Theorem \ref{MainResult}, which relies (indirectly) on the result 
about elimination of imaginaries proved by Sela in \cite{SelaIm}.

\section{Model theory of the free group} \label{Stability}

In this section, we recall some results about model theoretic properties of the free group. 
For the benefit of the general reader, we quickly recall the model theoretic definitions, 
but for more background on stable theories we refer the reader to \cite{PillayStability}. We denote by $\F_n$ the free group of rank 
$n$ with basis $e_1,\ldots,e_n$.

\subsection{Stability and generic types}

Recall that a theory $T$ is stable if no formula 
has the order property, i.e. there are no $(\bar{b}_n)_{n<\omega},(\bar{c}_n)_{n<\omega}$ and $\phi(\bar{x},\bar{y})$ 
such that $\mathcal{M}\models \phi(\bar{b}_i,\bar{c}_j)$ if and only if $i<j$, in any model $\mathcal{M}$ of $T$.

The following is a deep result of Sela \cite{SelaStability}. 

\begin{theorem}
Let $\F$ be a non-abelian free group. Then $\mathcal{T}h(\F)$ is stable.
\end{theorem}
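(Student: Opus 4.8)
The strategy is to verify the definition of stability directly: fix an arbitrary formula $\phi(\bar x,\bar y)$ and show it does not have the order property, i.e.\ there is no sequence $(\bar b_i)_{i<\omega},(\bar c_j)_{j<\omega}$ in a model of $\mathcal{T}h(\F)$ with $\phi(\bar b_i,\bar c_j)$ holding exactly when $i<j$. Since $T$ is stable as soon as every formula is stable, it suffices to do this for each $\phi$ separately. Using the elementary chain $\F_2\prec\F_3\prec\cdots$ we may further assume that a putative order-property witness $(\bar b_i),(\bar c_j)$ lives inside a single countable free group, so the question becomes one about the geometry of tuples in $\F_\omega$.

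The first --- and by far the hardest --- step is to obtain enough structural control over definable subsets of $\F^{|\bar x|}$. This is essentially the content of Sela's solution to Tarski's problem (with a parallel development by Kharlampovich--Myasnikov): via Makanin--Razborov diagrams one attaches to $\phi$ a finite collection of limit groups (finitely generated, fully residually free groups), and via the Merzlyakov-type theorem on formal solutions together with the associated $\omega$-residually free tower (NTQ) resolutions one produces, uniformly in $\bar b$, a description of each $\phi(\F,\bar b)$ as a finite Boolean combination of ``basic'' sets of the form ``there is a solution of a fixed finite system of equations lying on a fixed tower and compatible with $\bar b$''. Roughly speaking, one reduces matters to quantifier elimination down to the Boolean algebra generated by such $\forall\exists$-definable basic sets; all of the heavy machinery of Diophantine geometry over groups is spent here.

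The second step is to rule out the order property for the basic sets by a compactness/limit argument in the style of Rips--Sela. Assume a basic formula linearly orders an infinite sequence $(\bar b_i),(\bar c_j)$; choose such a witness of minimal complexity in the sense of the associated towers, and pass to a Bestvina--Paulin limit of the corresponding actions on Cayley graphs. This yields an action of a limit group on a real tree which encodes the infinite linear order. Applying Sela's shortening argument, this action can either be properly shortened, contradicting minimality of the witness, or else it produces a strictly descending chain of quotients of limit groups, contradicting the descending chain condition for limit groups. Either way we reach a contradiction, so no $\phi$ has the order property and $\mathcal{T}h(\F)$ is stable. I expect the main obstacle to be, without question, the first step: without the description of definable sets supplied by Sela's quantifier analysis there is simply no handle on an arbitrary first-order $\phi$, whereas the tree-theoretic argument of the second step is comparatively self-contained once that structure is in hand.
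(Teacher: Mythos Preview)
The paper does not prove this theorem at all: it is stated as a deep result of Sela and simply cited to \cite{SelaStability}. So there is no proof in the paper to compare your proposal against; the paper treats stability of $\mathcal{T}h(\F)$ as a black box imported from the literature.

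As for your sketch itself: what you wrote is a plausible high-level narrative of the ingredients in Sela's programme, but it is not a proof and in places the storyline does not match how the argument actually runs. Two points are worth flagging. First, your ``second step'' suggests that from an order-property witness one passes directly to a Bestvina--Paulin limiting action on a real tree and then invokes the shortening argument to derive a contradiction. Sela's stability paper does not proceed this way: the argument is a rather delicate combinatorial analysis of how the finitely many ``branches'' in the quantifier-elimination procedure (graded resolutions, formal solutions, test sequences) interact, and one shows that along any purported order-property sequence the truth value of $\phi(\bar b_i,\bar c_j)$ eventually stabilizes in $j$ for fixed $i$ (and dually). The shortening argument and limiting trees are buried inside the construction of those finitely many procedures, not applied at the end to the order witness. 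Second, the reduction you describe in the first step is essentially the full quantifier-elimination theorem for $\mathcal{T}h(\F)$; invoking it wholesale is legitimate as a citation but is not something one can reasonably call a ``step'' of a proof --- it is several hundred pages of work, and your one-paragraph summary of it (Makanin--Razborov diagrams, towers, formal solutions) names the objects without saying what is actually proved about them.

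In short: your proposal is a fair advertisement for the shape of the theory, but it is not close to a self-contained argument, and since the paper itself only cites the result, the honest comparison is that neither you nor the paper proves the theorem here.
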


For background on stable groups we suggest \cite{PoizatStableGroups}. We recall that a formula $\phi$ with one free variable in 
a stable group is {\em generic} if finitely many translates of the formula by elements of the group cover the whole group. 
A type of a stable group is called {\em generic} if it contains only generic formulas. Poizat proved that $\F_{\omega}$ is connected, i.e. 
it has no proper definable subgroup of finite index. Assuming stability his result is equivalent 
to saying that there exists a unique generic type (over any set of parameters). Moreover 
Pillay proved the following (see \cite{PillayGenericity}): 

\begin{theorem} \label{GenericType}
Let $\phi(x)$ be a formula over $\F_n$. Then $\phi$ is generic if and only if $\F_{n+1}\models \phi(e_{n+1})$.
\end{theorem}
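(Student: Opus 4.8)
The plan is to show that the unique generic type over $\F_n$ is $p_n:=\tp(e_{n+1}/\F_n)$, where $\F_{n+1}=\F_n*\langle e_{n+1}\rangle$ is viewed as a free factor of $\F_\omega$. Write $T=\mathcal{T}h(\F_\omega)$; since $\F_n\prec\F_\omega$, the group $\F_n$ is a model of $T$, and moreover $\F_{n+1}\models\phi(e_{n+1})$ if and only if $\F_\omega\models\phi(e_{n+1})$, i.e.\ if and only if $\phi\in p_n$. By Poizat's connectedness result recalled above there is a unique generic type $q_n$ over $\F_n$, and by definition every formula in $q_n$ is generic. Hence it suffices to establish the single statement $(\star)$: every generic formula over $\F_n$ is satisfied by $e_{n+1}$ in $\F_\omega$. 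Indeed, $(\star)$ shows that $q_n\subseteq p_n$ as sets of formulas, and since both are complete types over $\F_n$ this forces $q_n=p_n$; the two directions of the theorem then follow at once, the forward direction being exactly $(\star)$ and the backward direction being that every formula of $p_n=q_n$ is generic.

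To prove $(\star)$, let $\phi(x)$ be a generic formula over $\F_n$. By definition finitely many translates of $\phi(\F_\omega)$ cover $\F_\omega$, and since for a fixed number of translates this is a first-order statement with parameters in $\F_n\subseteq\F_\omega$, the translating elements may be taken inside $\F_\omega$; write $\F_\omega=\bigcup_{i=1}^{k}g_i\,\phi(\F_\omega)$ with $g_1,\dots,g_k\in\F_\omega$. Now choose $m>n$ large enough that $g_1,\dots,g_k\in\F_m$; then the basis element $e_{m+1}$ occurs in none of the $g_i$. Since $e_{m+1}\in\F_\omega$, there is $i_0$ with $e_{m+1}\in g_{i_0}\phi(\F_\omega)$, that is, $\F_\omega\models\phi(g_{i_0}^{-1}e_{m+1})$.

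The key point is that $g_{i_0}^{-1}e_{m+1}$ has the same type over $\F_n$ as $e_{n+1}$. Indeed, transposing the basis elements $e_{n+1}$ and $e_{m+1}$ (and fixing all others) is an automorphism of $\F_\omega$, and so is the Nielsen transvection sending $e_{m+1}$ to $g_{i_0}^{-1}e_{m+1}$ and fixing every other basis element --- this last map is well defined precisely because $g_{i_0}$ does not involve $e_{m+1}$. Their composite is an automorphism $\rho$ of $\F_\omega$ that fixes $\F_n$ pointwise and sends $e_{n+1}$ to $g_{i_0}^{-1}e_{m+1}$, so $\tp(g_{i_0}^{-1}e_{m+1}/\F_n)=\tp(e_{n+1}/\F_n)=p_n$. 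As $\phi$ has parameters in $\F_n$ and holds of $g_{i_0}^{-1}e_{m+1}$, it holds of $e_{n+1}$ in $\F_\omega$, hence in $\F_{n+1}$. This proves $(\star)$, and with it the theorem.

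The step I expect to be the crux is $(\star)$, and within it the need to neutralise the translating elements $g_1,\dots,g_k$: a priori these may sit anywhere in $\F_\omega$, and the device handling them is to pass, \emph{after} the $g_i$ have been fixed, to a basis element $e_{m+1}$ that is fresh for all of them, so that a single transvection together with a transposition produces an automorphism of $\F_\omega$ fixing $\F_n$ pointwise and carrying $e_{n+1}$ onto $g_{i_0}^{-1}e_{m+1}$. Everything else --- that $\F_n$ is a model of $T$, the definitions of generic formula and generic type, Poizat's connectedness yielding existence and uniqueness of the generic type over $\F_n$, and the elementarity $\F_{n+1}\prec\F_\omega$ --- is quoted from the discussion above.
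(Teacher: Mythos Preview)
The paper does not give its own proof of this theorem: it is quoted as a result of Pillay with a reference to \cite{PillayGenericity}, so there is nothing in the paper to compare your argument against directly.

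That said, your proof is correct and is essentially the argument Pillay gives. The reduction to showing $q_n=p_n$ via uniqueness of the generic type is standard, and your proof of $(\star)$ is the right one: pass to a fresh basis element beyond all the translating parameters, then use a Nielsen automorphism fixing $\F_n$ to carry $e_{n+1}$ onto $g_{i_0}^{-1}e_{m+1}$. One small point worth making explicit for the reader is the order of composition: with $\tau$ the transposition $e_{n+1}\leftrightarrow e_{m+1}$ and $\sigma$ the transvection $e_{m+1}\mapsto g_{i_0}^{-1}e_{m+1}$ (fixing the other basis elements), you want $\rho=\sigma\circ\tau$, so that $\rho(e_{n+1})=\sigma(e_{m+1})=g_{i_0}^{-1}e_{m+1}$ and $\rho$ fixes each $e_i$ with $i\le n$. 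You have this right, but stating the order removes any ambiguity. The step where you descend from the monster model to $\F_\omega$ for the covering translates is also fine: the existence of $k$ covering translates is expressed by a sentence with parameters in $\F_n$, hence transfers to $\F_\omega$.
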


\subsection{Equationality}

A stronger notion than stability is that of equationality. 
\begin{definition} Let $T$ be a complete first-order theory and 
let $\phi(\bar{x},\bar{y})$ be a first-order formula (without parameters). We say that $\phi$ is an {\em equation} 
(with respect to $\bar{y}$) if there exists $n<\omega$ such that the intersection of any family of instances of 
$\phi(\bar{x},\bar{y})$, i.e. $\{\phi(\bar{x},\bar{b}_i) \ |\ i\in I\}$, is equivalent (in $T$) to the intersection 
of a finite subfamily $\{\phi(\bar{x},\bar{b}_j) \ | \ j\in J\subseteq I\}$ with $\abs{J}\leq n$. 

A first order theory is called {\em equational} if any formula is equivalent to a boolean combination of instances 
of equations. 
\end{definition}
It is not hard to see that an equational first-order theory is stable. 

The first-order theory of a non-abelian free group is the first natural example of a stable theory 
which is not equational as proved by Sela in \cite{SelaEquational}. On the other hand, we have:

\begin{theorem}[\cite{SelaStability}]\label{Equa}
Let $\phi(\bar{x},\bar{y})$ be a Diophantine formula, i.e. 
$\phi(\bar{x},\bar{y}):=\exists \bar{z}(\Sigma(\bar{x},\bar{y},\bar{z})=1)$. 
Then $\phi$ is an equation.
\end{theorem}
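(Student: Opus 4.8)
The plan is to combine an elementary model-theoretic reduction with the structure theory of systems of equations over free groups.

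\emph{Reduction to a uniform chain bound.} For fixed $N$, the assertion ``\,the conjunction of any $N+1$ instances of $\phi$ is equivalent to the conjunction of some $N$ of them\,'' is a single first-order sentence $\psi_N$ (over $\F_n$), and an easy induction on the size of the index set shows that $\psi_N$ holds in $\mathrm{Th}(\F_n)$ if and only if $\phi$ is an equation with constant $N$ (if every $(N+1)$-fold conjunction drops to $N$, one greedily removes redundant instances, and for an infinite family one takes a conjunction of minimal solution set among those using $\le N$ instances). Since $\mathrm{Th}(\F_n)$ is complete and $\F_n$ is a model of it, it suffices to find one $N$ with $\F_n\models\psi_N$, i.e. to bound, inside $\F_n$ itself, the lengths of strictly decreasing chains
$$\phi(\F_n,\bar b_0)\ \supsetneq\ \phi(\F_n,\bar b_0)\cap\phi(\F_n,\bar b_1)\ \supsetneq\ \cdots .$$
Given such a chain, pick for each $m\ge 1$ a tuple $\bar a_m\in\bigcap_{j<m}\phi(\F_n,\bar b_j)\setminus\phi(\F_n,\bar b_m)$; thus $\Sigma(\bar a_m,\bar b_j,\bar z)=1$ is solvable for every $j<m$ and unsolvable for $j=m$.

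\emph{Encoding by one group and invoking the structure theory.} Write $\phi(\bar x,\bar y)=\exists\bar z\,(\Sigma(\bar x,\bar y,\bar z)=1)$ and form the fixed finitely generated group $G=\langle\,\F_n,\bar x,\bar y,\bar z\mid \Sigma(\bar x,\bar y,\bar z)\,\rangle$ with distinguished parameter subgroup $P=\langle\F_n,\bar y\rangle$. For any $\bar b$ one has $\phi(\F_n,\bar b)=\{\,h(\bar x):h\in\Hom(G,\F_n),\ h|_{\F_n}=\mathrm{id},\ h(\bar y)=\bar b\,\}$, so the chain above records which values of $\bar x$ survive as the image of $\bar y$ runs through $\bar b_0,\bar b_1,\dots$. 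The engine is a (parametrized) Makanin--Razborov analysis of $G$ relative to $P$ à la Sela \cite{SelaStability}: a \emph{single finite} diagram describes $\Hom(G,\F_n)$ uniformly in $h(\bar y)$. Using it, one argues that a proper descent $\bigcap_{j<m}\phi(\F_n,\bar b_j)\supsetneq\bigcap_{j\le m}\phi(\F_n,\bar b_j)$ can occur only when the new parameter $\bar b_m$ ``switches off'' a stratum of the diagram that still contributed to the intersection. As the diagram has only finitely many strata, the number of proper descents — hence the length of any such chain — is bounded by a constant $N=N(\Sigma)$ depending only on $\Sigma$. This gives $\F_n\models\psi_N$, and by the reduction above $\phi$ is an equation.

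\emph{Where the difficulty lies.} The first paragraph is routine; the content is the second, and within it the delicate step is the claim that each proper descent consumes a stratum. Making this rigorous needs exactly the hard input of \cite{SelaStability} (built on Makanin and Razborov and on the theory of limit groups): the existence of a \emph{uniform} finite diagram handling all the parameters $\bar b_j$ simultaneously, the finiteness and well-foundedness of the diagram (finiteness of the associated resolutions), and the rigidity of its leaves. A less structural alternative is to pass to a limit action of the sequence $(\bar a_m,\bar b_j)$ and appeal to equational Noetherianity of the resulting limit group, but there the obstacle reappears in the guise of controlling, uniformly in $m$, the auxiliary witnesses $\bar z$, which a priori live in free groups of unbounded rank.
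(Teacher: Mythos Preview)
The paper does not prove this theorem: it is quoted from \cite{SelaStability} and invoked as a black box in Section~4. There is therefore no proof in the paper to compare your attempt against.

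As an outline of Sela's argument your sketch is reasonable, and you are candid that the substantive step --- that each proper descent ``switches off a stratum'' of the graded Makanin--Razborov diagram --- ``needs exactly the hard input of \cite{SelaStability}''. So what you have written is a roadmap rather than a proof, and you present it as such; the finiteness and well-foundedness of the parametrized diagram, and the claim that a new parameter can only shrink the intersection by eliminating a stratum, together constitute essentially the whole of Sela's proof and cannot be summarised away.

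One correction to the part you call routine. The sentence $\psi_N$ you write down (``the conjunction of any $N{+}1$ instances is equivalent to the conjunction of some $N$ of them'') is a \emph{breadth} condition, and it is not equivalent to equationality with constant $N$: a family of pairwise nested instances has breadth~$1$, yet an infinite strictly descending nested chain witnesses failure of equationality. The first-order scheme that \emph{is} equivalent to $\phi$ being an equation is the chain condition ``there is no strictly descending chain $\phi(\mathbb{M},\bar b_0)\supsetneq \phi(\mathbb{M},\bar b_0)\cap\phi(\mathbb{M},\bar b_1)\supsetneq\cdots$ of length greater than $N$'', for some $N$; this follows by compactness together with the greedy extraction you describe. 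You in fact pass to chains immediately afterwards, so the repair is cosmetic, but the ``easy induction'' you invoke does not establish the equivalence as you stated it.
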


\subsection{Superstable formulas}

We fix a stable first order theory $T$ and we work in a ``big'' 
saturated model $\mathbb{M}$. 

\begin{definition}
Let $\phi(\bar{x},\bar{b})$ be a first order formula over $\mathbb{M}$ and $A\subset\mathbb{M}$. Then 
$\phi(\bar{x},\bar{b})$ forks over $A$ if there is an infinite indiscernible sequence $(\bar{b}_i)_{i<\omega}$ with 
$tp(\bar{b}_1/A)=tp(\bar{b}/A)$, 
such that the set $\{\phi(\bar{x},\bar{b}_i): i<\omega\}$ is inconsistent. 
\end{definition}

The following remark is immediate.

\begin{remark}\label{Sub}
Let $\mathbb{M}\models \phi(\bar{x})\rightarrow \psi(\bar{x})$. 
Suppose $\psi(\bar{x})$ forks over $A$, then $\phi(\bar{x})$ forks over $A$.
\end{remark}

\begin{definition}
Let $\phi(\bar{x}),\psi(\bar{x})$ be two consistent first order formulas (possibly over parameters). We say $\phi<\psi$, if 
$\mathbb{M}\models \phi(\bar{x})\rightarrow\psi(\bar{x})$ and there is a set of parameters $A$, such that $\psi(\bar{x})$ is 
definable over $A$ and $\phi(\bar{x})$ forks over $A$.
\end{definition}

\begin{definition}
$\phi$ is superstable if there is no infinite descending chain of formulas $\phi:=\phi_1>\phi_2>\ldots>\phi_n>\ldots$ . 
\end{definition}

%The above definition allows for certain weakenings as follows:
%
%\begin{definition}
%Let $\cal{L}$ be a first-order language and $\Delta:=\{\psi(\bar{x},\bar{y})\}$ be a subset of the set of $\cal{L}$-formulas. Then $\phi$ is $\Delta$-superstable, 
%if there is no infinite descending chain of instances of formulas in $\Delta$: $\phi>\psi_1>\ldots>\psi_n>\ldots$.
%\end{definition}

\section{Formal solutions}

For convenience, whenever $\bar{x}:=(x_1,x_2,\ldots,x_n)$ is a tuple of letters in some vocabulary, we denote by $\F(\bar{x})$ the 
free group freely generated by $x_1,\ldots,x_n$. Whenever we have a system of equations $\Sigma(\bar{x},\bar{y})=1$ over a group $P$, 
we will denote by $G_{\Sigma}$ the group $\langle \bar{x},\bar{y}, P \ | \ \Sigma(\bar{x},\bar{y})\rangle$.

\begin{definition} 
Let $\Sigma(\bar{x}, \bar{y})=1$ be a system of equations with parameters in a group $P$. 
A formal solution for $\Sigma$ over $P$ is a tuple $\tilde{y}(\bar{x}, \bar p)$ of $P * \F(\bar{x})$ 
such that $\Sigma(\bar{x}, \tilde{y}(\bar{x}, \bar p))=1$ holds in $P * \F(\bar{x})$. 
\end{definition}

Note that if a formal solution exists, then for any $\bar{x}_0 \in P$, the equation 
$\Sigma(\bar{x}_0, \bar{y})=1$ has a solution in $P$, given by $\tilde{y}(\bar{x}_0, \bar p)$. 

Also, this implies that the map $r:G_{\Sigma} \to P * \F(\bar{x})$ given by 
$\bar{x} \mapsto \bar{x}$, $P \stackrel{Id}{\mapsto} P$ and $\bar{y} \mapsto \tilde{y}$ is a morphism. 
By definition, it sends $\langle P, \bar{x} \rangle_{G_\Sigma}$ surjectively onto $P * \F(\bar{x})$, 
hence $\langle P, \bar{x} \rangle_{G_\Sigma}$ is exactly $P * \F(\bar{x})$ and $r: G_{\Sigma} \to P * \F(\bar{x})$ is a retraction. 

The following lemma tells us that if the Diophantine condition over $\F_n$, $\exists \bar{y} \Sigma(x, \bar{y}) = 1$,  defines 
a generic set, then $\Sigma$ admits a formal solution over $\F_n$.

\begin{lemma}\label{GenericMerzlyakov}
Let $\Sigma(x,\bar{y})=1$ be a system of equations over $\F_n$. 
Suppose that $\F_{n+1} \models \exists\bar{y}(\Sigma(e_{n+1},\bar{y})=1)$. 

Then there exists a formal solution for $\Sigma$ over $\F_n$.
In particular $\F_n\models \forall x\exists\bar{y}(\Sigma(x,\bar{y})=1)$.
\end{lemma}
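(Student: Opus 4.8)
The plan is to use Theorem~\ref{GenericType} to translate the genericity hypothesis into a statement about formulas, and then invoke a Merzlyakov-type theorem to produce the formal solution. First, I observe that the hypothesis $\F_{n+1}\models\exists\bar{y}(\Sigma(e_{n+1},\bar{y})=1)$ says precisely that the Diophantine formula $\phi(x):=\exists\bar{y}(\Sigma(x,\bar{y})=1)$ is generic over $\F_n$, by Theorem~\ref{GenericType}. Since $\F_\omega$ is connected (Poizat), the unique generic type $p$ over $\F_n$ contains $\phi(x)$. In particular, in the big saturated model $\mathbb{M}$, a realisation $c\models p$ satisfies $\Sigma(c,\bar{y})=1$ for some $\bar{y}$, and $c$ generates a free factor (behaves like a fresh basis element) in an elementary extension; this is the usual way genericity feeds into such arguments.

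The main step is Merzlyakov's theorem: if $\F_n * \langle x\rangle \cong \F_{n+1}$ (with $x$ a new basis element) satisfies $\exists\bar{y}(\Sigma(x,\bar{y})=1)$, then there is a formal solution $\tilde{y}(x,\bar{p})\in \F_n * \F(x)$ with $\Sigma(x,\tilde{y}(x,\bar{p}))=1$ holding in $\F_n*\F(x)$. The classical statement of Merzlyakov is exactly about positive sentences $\forall x\exists\bar{y}(\Sigma(x,\bar y)=1)$ holding in a free group, yielding a formal solution over the free group generated by the universally quantified variables; the modern generalisations (Sela, Kharlampovich--Myasnikov) extend this to equations with coefficients. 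The point is that the hypothesis $\F_{n+1}\models\exists\bar y(\Sigma(e_{n+1},\bar y)=1)$ is equivalent, because $\F_n\prec\F_{n+1}$ and $e_{n+1}$ behaves generically, to the universal-existential sentence $\forall x\exists\bar y(\Sigma(x,\bar y)=1)$ being true in $\F_n$: indeed if $\F_n$ satisfied $\exists x\forall\bar y(\Sigma(x,\bar y)\neq 1)$, such an $x$ would be a non-generic element, but then $\F_{n+1}\not\models\exists\bar y(\Sigma(e_{n+1},\bar y)=1)$ would follow from Theorem~\ref{GenericType} applied to the complementary formula — so the universal sentence holds. Then Merzlyakov applies directly.

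Once the formal solution $\tilde y(x,\bar p)$ over $\F_n$ is in hand, the "in particular" clause is immediate: for any $x_0\in\F_n$, substituting $x\mapsto x_0$ in the identity $\Sigma(x,\tilde y(x,\bar p))=1$ of $\F_n*\F(x)$ gives, via the substitution homomorphism $\F_n*\F(x)\to\F_n$ fixing $\F_n$ and sending $x\mapsto x_0$, that $\Sigma(x_0,\tilde y(x_0,\bar p))=1$ in $\F_n$; hence $\F_n\models\forall x\exists\bar y(\Sigma(x,\bar y)=1)$.

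The hardest part will be setting up the reduction to a clean application of Merzlyakov's theorem: one must be careful that "$e_{n+1}$ is generic" really does let us treat $x$ as a free basis element adjoined to $\F_n$ — this is where connectedness of $\F_\omega$ and the characterisation of the generic type via $\F_{n+1}$ do the work — and one must cite the version of Merzlyakov's theorem valid for systems with coefficients in $\F_n$ rather than merely coefficient-free systems. Everything else (the substitution argument, the passage between the $\exists$ and $\forall\exists$ forms) is routine.
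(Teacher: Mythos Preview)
Your detour through genericity and Merzlyakov's theorem misses that the lemma is essentially a tautology once you observe that $\F_{n+1}=\F_n*\langle e_{n+1}\rangle$ is literally $\F_n*\F(x)$ under the renaming $e_{n+1}\leftrightarrow x$. The paper's proof is two lines: pick a witness $\tilde y(e_1,\ldots,e_n,e_{n+1})\in\F_{n+1}$ for the existential; then $\Sigma(x,\tilde y(e_1,\ldots,e_n,x))=1$ holds in $\F_n*\F(x)$, which is by definition a formal solution. No stability, no connectedness, no Merzlyakov needed.

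Beyond being unnecessarily heavy, your argument has a genuine gap in the step where you deduce the universal sentence $\forall x\,\exists\bar y(\Sigma(x,\bar y)=1)$ from genericity of $\phi(x):=\exists\bar y(\Sigma(x,\bar y)=1)$. You write that if some $x_0\in\F_n$ satisfied $\neg\phi$, then ``Theorem~\ref{GenericType} applied to the complementary formula'' would give $\F_{n+1}\models\neg\phi(e_{n+1})$. But Theorem~\ref{GenericType} says $\neg\phi$ is generic \emph{iff} $\F_{n+1}\models\neg\phi(e_{n+1})$; mere satisfiability of $\neg\phi$ in $\F_n$ does not make $\neg\phi$ generic. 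Generic formulas need not define the whole group, so you cannot conclude $\forall x\,\phi(x)$ from genericity alone without further input --- and that further input is precisely the content of the lemma (via the formal solution). In effect you are trying to prove the ``in particular'' clause first in order to feed it to Merzlyakov, but the only clean way to get that clause is to produce the formal solution directly, which is what the paper does.
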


\begin{proof} Let $\tilde{y}(e_1, \ldots, e_n, e_{n+1})$ be an element of $\F_{n+1}$ such that $\Sigma(e_{n+1}, \tilde{y}(e_1, \ldots, e_n, e_{n+1})) = 1$ in $\F_{n+1}$. Clearly $\Sigma(x, \tilde{y}(e_1, \ldots, e_n, x)) = 1$ in $\F_n * \F(x)$, so 
$\tilde{y}(e_1, \ldots, e_n, x)$ is a formal solution for $\Sigma$ over $\F_n$. 
\end{proof}

The following generalization is straightforward. 

\begin{lemma}\label{GenerictupleMerzlyakov}
Let $\Sigma(\bar{x},\bar{y})=1$ be a system of equations over $\F_n$ with $\abs{\bar{x}}=m$. Suppose 
$\F_{n+m}\models\exists\bar{y}(\Sigma(e_{n+1},\ldots, e_{n+m},\bar{y})=1)$. 
Then there exists a formal solution for $\Sigma$ over $\F_n$.

In particular $\F_n\models \forall \bar{x} \exists\bar{y}(\Sigma(\bar x,\bar{y})=1)$.
\end{lemma}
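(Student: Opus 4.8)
The plan is to reduce Lemma~\ref{GenerictupleMerzlyakov} to the case of a single $x$-variable already treated in Lemma~\ref{GenericMerzlyakov} by a standard "folding" trick: encode the $m$-tuple $\bar{x}=(x_1,\ldots,x_m)$ as a single element $x$ together with auxiliary words that extract its coordinates. Concretely, if $u$ is a word in a single generator and $e_1,\ldots,e_n$ are the fixed basis of $\F_n$, one can write each $x_i$ as $w_i(u,e_1,\ldots,e_n)$ for suitable fixed words $w_i$; the point is that inside a free group a long enough element $u$ "contains" arbitrarily many independent-looking subwords. I would then apply the rank-one case to the system $\Sigma'(u,\bar y) := \Sigma(w_1(u,\bar e),\ldots,w_m(u,\bar e),\bar y) = 1$ over $\F_n$ and transport the resulting formal solution back.

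In more detail, first I would record the key combinatorial fact: there exist fixed words $w_1,\ldots,w_m$ in $\F(t) * \F(e_1,\ldots,e_n)$ such that the map $\F_m \to \F_{n+1}$ sending the $i$-th free generator to $w_i(e_{n+1},e_1,\ldots,e_n)$ is injective (for instance $w_i = e_{n+1}^{\, N_i} e_1 e_{n+1}^{\, -N_i}$ or similar, with suitably spaced exponents, realizes $\F_m$ as a free factor-ish subgroup of $\F_{n+1}$; any explicit choice embedding the standard generators of $\F_m$ works). Then, given the hypothesis $\F_{n+m}\models \exists\bar y\,(\Sigma(e_{n+1},\ldots,e_{n+m},\bar y)=1)$, I would use the injection $e_{n+i}\mapsto w_i(e_{n+1},e_1,\ldots,e_n)$, which extends to a retraction of an appropriate free group onto $\F_{n+1}$ fixing $e_1,\ldots,e_n$, to conclude $\F_{n+1}\models \exists\bar y\,(\Sigma(w_1(e_{n+1},\bar e),\ldots,w_m(e_{n+1},\bar e),\bar y)=1)$, i.e. $\F_{n+1}\models \exists\bar y\,(\Sigma'(e_{n+1},\bar y)=1)$ for $\Sigma'$ the single-variable system above.

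Next I would invoke Lemma~\ref{GenericMerzlyakov} applied to $\Sigma'(x,\bar y)$: it yields a formal solution $\tilde y(e_1,\ldots,e_n,x) \in \F_n * \F(x)$ with $\Sigma'(x,\tilde y(\bar e,x))=1$ in $\F_n * \F(x)$, that is $\Sigma(w_1(x,\bar e),\ldots,w_m(x,\bar e),\tilde y(\bar e,x))=1$. Finally, to produce a formal solution for $\Sigma$ itself over $\F_n$ in the variables $\bar x=(x_1,\ldots,x_m)$, I would note that the words $w_i(x,\bar e)$ are not in general an arbitrary tuple $\bar x$, so one more step is needed: one applies the substitution that sends $x$ to a word guaranteeing the $w_i(x,\bar e)$ realize generic behaviour, and uses genericity of the tuple $(e_{n+1},\ldots,e_{n+m})$ in $\F_{n+m}$ together with Theorem~\ref{GenericType} (tuple version) to see that the Diophantine set $\exists \bar y\,(\Sigma(\bar x,\bar y)=1)$ being satisfied by the generic tuple forces the existence of a genuine formal solution in the free group $\F_n * \F(x_1,\ldots,x_m)$. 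Cleanly: since $(e_{n+1},\ldots,e_{n+m})$ is a generic tuple over $\F_n$ and $\F_{n+m}$ is a model, the formal solution over $\F_n * \F(\bar x)$ is obtained by the very same argument as in Lemma~\ref{GenericMerzlyakov} with $\F_{n+m}$ in place of $\F_{n+1}$, simply taking $\bar x = (e_{n+1},\ldots,e_{n+m})$ and observing $\langle e_1,\ldots,e_n, e_{n+1},\ldots,e_{n+m}\rangle = \F_n * \F(\bar x)$.

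I expect the main (minor) obstacle to be purely bookkeeping: making sure that the embedding/retraction used to pass from $\F_{n+m}$ to $\F_{n+1}$ fixes the parameters $e_1,\ldots,e_n$ pointwise, so that the formal-solution property is genuinely preserved, and keeping straight which free group each tuple lives in. In fact, the cleanest route avoids the folding trick entirely: the proof of Lemma~\ref{GenericMerzlyakov} goes through verbatim with $m$ variables, because the only fact used is that $\F_n * \F(x_1,\ldots,x_m) = \F_{n+m}$ with $x_i = e_{n+i}$, and $\Sigma(e_{n+1},\ldots,e_{n+m},\tilde y(e_1,\ldots,e_{n+m}))=1$ in $\F_{n+m}$ immediately says $\tilde y(e_1,\ldots,e_n,x_1,\ldots,x_m)$ is a formal solution over $\F_n$. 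The "in particular" clause then follows since any $\bar x_0 \in \F_n^m$ is the image of $\bar x$ under a retraction $\F_n * \F(\bar x) \to \F_n$, so $\Sigma(\bar x_0, \tilde y(\bar e, \bar x_0)) = 1$ holds in $\F_n$; this is exactly why the lemma is labelled "straightforward."
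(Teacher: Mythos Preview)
Your final paragraph is exactly the intended argument, and it is all the paper has in mind when it calls the generalization ``straightforward'': pick $\tilde y(e_1,\ldots,e_{n+m})\in\F_{n+m}$ witnessing the existential, identify $\F_{n+m}$ with $\F_n*\F(x_1,\ldots,x_m)$ via $e_{n+i}\mapsto x_i$, and read off that $\tilde y(e_1,\ldots,e_n,x_1,\ldots,x_m)$ is a formal solution over $\F_n$. The ``in particular'' clause follows by specializing $\bar x$, just as you say.

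The preceding folding detour, by contrast, does not close. Passing from $\F_{n+m}$ to $\F_{n+1}$ via a homomorphism fixing $\F_n$ and sending $e_{n+i}\mapsto w_i(e_{n+1},\bar e)$ does give $\F_{n+1}\models\exists\bar y\,\Sigma'(e_{n+1},\bar y)=1$, and Lemma~\ref{GenericMerzlyakov} then yields a formal solution for $\Sigma'$ in $\F_n*\F(x)$. But this only tells you that $\Sigma(w_1(x,\bar e),\ldots,w_m(x,\bar e),\tilde y(\bar e,x))=1$ holds in $\F_n*\F(x)$; the elements $w_1(x,\bar e),\ldots,w_m(x,\bar e)$ are \emph{not} part of a basis of $\F_n*\F(x)$ (indeed $\F_n*\F(x)$ has rank $n+1<n+m$), so there is no homomorphism $\F_n*\F(x)\to\F_n*\F(\bar x)$ fixing $\F_n$ and sending each $w_i(x,\bar e)$ to $x_i$. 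Your ``one more step'' paragraph does not supply such a map, and in fact none exists; at that point you quietly abandon the reduction and redo the $m$-variable argument directly. I would simply delete everything before the last paragraph.
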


The above lemma has the following immediate corollary we believe well known.
\begin{corollary}
Let $\phi(\bar{x})$ be a positive formula over $\F_n$ such that $\F_{n+m}\models \phi(e_{n+1},\ldots, e_{n+m})$. 
Then $\phi(\F_n)$ is $\F_n^m$.
\end{corollary}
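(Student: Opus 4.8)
The plan is to reduce the statement about a positive formula $\phi(\bar x)$ to the Diophantine case handled by Lemma~\ref{GenerictupleMerzlyakov}. First I would recall that a positive formula (no negations) is built from atomic formulas $w(\bar x)=1$ using $\wedge$, $\vee$ and existential quantifiers; by pulling the existential quantifiers to the front and distributing $\wedge$ over $\vee$, one writes $\phi(\bar x)$ as a finite disjunction $\bigvee_{k} \exists \bar y\, (\Sigma_k(\bar x,\bar y)=1)$ of Diophantine formulas, each $\Sigma_k$ a finite system of equations over $\F_n$. This is the standard positive-formula normal form and costs nothing beyond propositional bookkeeping.

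Next I would use the hypothesis $\F_{n+m}\models\phi(e_{n+1},\ldots,e_{n+m})$ together with the disjunctive form: since $\F_{n+m}$ satisfies the disjunction at the tuple $(e_{n+1},\ldots,e_{n+m})$, it satisfies one of the disjuncts, say $\F_{n+m}\models\exists\bar y\,(\Sigma_{k_0}(e_{n+1},\ldots,e_{n+m},\bar y)=1)$. Now Lemma~\ref{GenerictupleMerzlyakov} applies to $\Sigma_{k_0}$ directly: it yields a formal solution for $\Sigma_{k_0}$ over $\F_n$, hence $\F_n\models\forall\bar x\,\exists\bar y\,(\Sigma_{k_0}(\bar x,\bar y)=1)$. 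Since $\exists\bar y\,(\Sigma_{k_0}(\bar x,\bar y)=1)$ implies $\phi(\bar x)$, we get $\F_n\models\forall\bar x\,\phi(\bar x)$, i.e. $\phi(\F_n)=\F_n^m$, as desired.

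The only point requiring a little care — and the step I expect to be the mild obstacle — is the reduction to a single finite system of equations: one must make sure that the existential quantifiers in the original $\phi$ range over a fixed finite tuple $\bar y$ (so that ``$\Sigma_k=1$'' is genuinely a finite system and the $\bar y$ of Lemma~\ref{GenerictupleMerzlyakov} is a legitimate tuple), and that a finite conjunction of equations is again an equation of the form $\Sigma=1$ — both are automatic in free groups, where a conjunction $w_1=1\wedge\cdots\wedge w_r=1$ is equivalent to the single equation obtained by stacking the $w_i$ (or, if one prefers a single word, using a standard trick with new existentially quantified variables, at which point Lemma~\ref{GenerictupleMerzlyakov} still applies after enlarging $\bar y$). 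Once this normalization is in place the corollary is immediate from Lemma~\ref{GenerictupleMerzlyakov}; no geometric or model-theoretic input beyond what is already quoted is needed.
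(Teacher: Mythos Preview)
Your argument is correct for positive \emph{existential} formulas, and in that restricted setting it is essentially the paper's proof: reduce to a Diophantine disjunct and apply Lemma~\ref{GenerictupleMerzlyakov}. The paper simply quotes Merzlyakov's result that a positive formula is equivalent to a Diophantine one, and then the lemma finishes.

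The gap is in your first sentence: you describe a positive formula as one built from atomics using $\wedge$, $\vee$ and $\exists$. The standard meaning of ``positive'' --- and the one the paper intends, since otherwise there would be no need to invoke \cite{Mer} --- allows universal quantifiers as well (anything but negation). For such formulas your ``pull the existentials to the front and take DNF'' step fails: a formula like $\forall z\,\exists y\,(\Sigma(\bar x,y,z)=1)$ cannot be rewritten syntactically as a finite disjunction of Diophantine formulas. The fact that in nonabelian free groups every positive formula is nevertheless equivalent to a Diophantine one is precisely the content of Merzlyakov's theorem, and it is a genuine theorem, not propositional bookkeeping. So either restrict your statement to positive existential $\phi$, or replace your normalization paragraph with a citation of Merzlyakov's result, as the paper does; once that is in place, your application of Lemma~\ref{GenerictupleMerzlyakov} is exactly right.
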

In other words, there is no proper positive formula which is generic.

\begin{proof}
It is enough to note that in \cite{Mer} it was proved that a positive formula over $\F_n$ is equivalent to a Diophantine one. 
\end{proof}

\section{Non-superstable sets in the free group}\label{OurProof}

We start this section by recalling from \cite{SklinosGenericType} the following recursively defined sequence of elements $b_i$ of $\F_{\omega}$:
$$b_1=e_1e_2^2$$ 
$$b_{i}=b_{i-1}\cdot e_{i}e_{i+1}^2$$ 
The elements $b_i$ form an independent set of realization of the generic type of the free group, 
indeed, the finite subset $\{b_1, \ldots, b_k\}$ together with $e_{k+1}$ form a basis of $\F_{k+1}$. In particular the above sequence is an indiscernible set.

In \cite{ThesisSklinos} it was proved that the first order theory of the free group is not Diophantinely-superstable, i.e. that
one can find an infinite descending chain $\psi_1>\psi_2>\ldots>\psi_m>\ldots$ of Diophantine formulas. 
Inspired by the proof of this,  we give an alternative proof of the following fact.
\begin{proposition}\label{Generic}
Let $\phi(x)$ be a generic formula over a free group $\F(a_1, \ldots, a_n)$. Then $\phi(x)$ is not superstable.
\end{proposition}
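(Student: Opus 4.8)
The plan is to produce, starting from a single generic formula $\phi(x)$ over $\F_n = \F(a_1,\dots,a_n)$, an explicit infinite descending chain $\phi = \phi_1 > \phi_2 > \dots$ witnessing non-superstability. The natural candidates for the $\phi_i$ are built using the independent generic sequence $(b_i)$ recalled above: since $\phi$ is generic and the $b_i$ all realize the generic type, $\phi(b_i)$ holds, and more importantly the tuples $(b_1,\dots,b_k,e_{k+1})$ are bases of $\F_{k+1}$. First I would use Theorem~\ref{GenericType}, or rather Lemma~\ref{GenerictupleMerzlyakov} and its method, to replace $\phi$ by a Diophantine lower bound: because $\phi$ is generic, $\F_{n+1}\models\phi(e_{n+1})$, and I would want to extract from the witness a Diophantine formula $\exists\bar y\,(\Sigma(x,\bar y)=1)$ implying $\phi(x)$ and still satisfied generically, so that we may assume $\phi$ itself is Diophantine. (If one cannot literally reduce to the Diophantine case, Remark~\ref{Sub} lets us instead work with a Diophantine sub-formula throughout, since forking of the weaker formula forces forking of $\phi$.)

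Next, with $\phi(x)$ Diophantine and generic, I would define $\phi_k(x)$ to be (roughly) the statement that $x$ together with suitable further parameters looks like the element $b_k$ sitting inside a rank-$(k+1)$ free factor on which $\phi$ is witnessed — concretely, a Diophantine formula, with parameters among the $b_i$ and $e_i$, asserting the existence of a basis extending $x$ in a prescribed way and a solution of $\Sigma$. The containments $\phi_{k+1}(x)\to\phi_k(x)\to\dots\to\phi(x)$ should be built into the definitions. The real content is the strictness: I must show $\phi_{k+1} < \phi_k$, i.e. that over an appropriate parameter set $A$ over which $\phi_k$ is defined, $\phi_{k+1}$ forks. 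For this I would exhibit an infinite indiscernible sequence of parameter-tuples, conjugate over $A$ to the parameters defining $\phi_{k+1}$, whose associated instances of $\phi_{k+1}$ are pairwise (or finitely) inconsistent — using that two distinct basis elements of the same free group cannot simultaneously satisfy incompatible ``being $b_{k+1}$ in the standard position'' conditions. The indiscernible sequence will come from the indiscernibility of $(b_i)_{i}$ together with the homogeneity of $\F_\omega$.

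The main obstacle I anticipate is precisely this forking step: writing down the indiscernible sequence of parameters and verifying that the corresponding family of Diophantine instances is inconsistent, rather than merely that no single common realization of the ``obvious'' kind exists. One has to rule out exotic solutions in the saturated model, and the clean tool for this is Theorem~\ref{Equa}: since the relevant formulas are Diophantine, they are equations, so an infinite inconsistent family already has a finite inconsistent subfamily, and it suffices to analyze inconsistency of finitely many instances at a time — which can be reduced to a statement about free groups (no homomorphism from the relevant one-relator-type group to a free group sends the generators to the prescribed basis elements). A secondary subtlety is bookkeeping the parameter sets $A_k$ so that each $\phi_k$ is genuinely $A_k$-definable while $\phi_{k+1}$ forks over $A_k$; I expect the recursive shape $b_i = b_{i-1}e_ie_{i+1}^2$ to make the choice of $A_k$ (something like $\{b_1,\dots,b_k\}$ or the corresponding $e_i$'s) fall out naturally.
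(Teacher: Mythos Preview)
Your plan has the right scenery---the sequence $(b_i)$, indiscernibility, and equationality all appear in the paper---but there is a real gap in the opening move. You propose to ``extract from the witness'' a Diophantine formula $D(x)$ with $D(x)\to\phi(x)$ that is still generic. Nothing in Theorem~\ref{GenericType} or Lemma~\ref{GenerictupleMerzlyakov} produces such a $D$: those results go from a Diophantine hypothesis to a formal solution, not from an arbitrary generic formula to a Diophantine one lying below it. For a general (possibly negative or high-complexity) generic $\phi$ there is no obvious Diophantine subformula that remains generic, so the reduction step does not go through. Your parenthetical fallback via Remark~\ref{Sub} is also tangled: the remark says that if $\chi\to\psi$ and $\psi$ forks then $\chi$ forks, so it transfers forking \emph{upward} to stronger formulas, not to $\phi$ from some Diophantine formula sitting below or above it in the way you describe.

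The paper's argument sidesteps all of this with a one-line trick: it never touches the shape of $\phi$. It simply takes the \emph{already established} descending forking chain of Diophantine formulas $\psi_1>\psi_2>\dots$ (the one built from the $b_i$'s in \cite{ThesisSklinos}) and sets $\phi_i:=\psi_i\wedge\phi$. Consistency of $\phi_i$ follows because $e_i$ satisfies $\psi_i$ and, being generic over $\bar a$, also satisfies $\phi$. Since $\phi_i\to\psi_i$ and $\psi_i$ forks over $b_1,\dots,b_{i-1}$, Remark~\ref{Sub} gives that $\phi_i$ forks over $\bar a,b_1,\dots,b_{i-1}$; and $\phi_{i+1}\to\phi_i$ is immediate from $\psi_{i+1}\to\psi_i$. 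That is the whole proof. The equationality argument you sketch for establishing forking is not needed here (it is essentially what the paper deploys later, in the proof of the general theorem, where a ready-made chain is not available); for Proposition~\ref{Generic} the forking of the $\psi_i$ is imported as a black box.
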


\begin{proof}
We consider the sequence $(b_k)_{k<\omega} $ in $\F_{\omega} = \F(e_1, e_2, \ldots)$. We also consider the following sequence of formulas over $\F_{\omega}$:

\begin{tabular}{l}
$\psi_1(x,b_1):=\exists y(x=b_1y^{-2})$\\
$\psi_2(x,b_1,b_2):=\exists y(x=b_1(b_1^{-1}b_2y^{-2})^{-2})$\\
$\vdots$\\ 
$\psi_{i}(x,b_1,\ldots,b_i):= \exists y(x=b_1(b_1^{-1}b_2(b_2^{-1}b_3(\ldots(b_{i-1}^{-1}b_i y^{-2})^{-2})\ldots)^{-2})^{-2})$\\
$\vdots$\\
\end{tabular}

Note that $\F_{\omega} \models \psi_i(e_i)$ so these formulas are consistent. It is easy to see that $\models\psi_{i+1}\rightarrow\psi_i$. 
Moreover, it was proved in \cite{ThesisSklinos} that $\psi_{i}$ forks over $b_1,\ldots,b_{i-1}$. 

Let $\phi_i:=\psi_i \land \phi$: since $e_i$ is generic over $\F(\bar{a})$, 
the formula $\phi_i$ is satisfied by $e_{i}$ in $\F(\bar a)*\F_{\omega}$, hence it is consistent. By Remark \ref{Sub} we have that $\phi_i$ forks over $\F(\bar{a}),b_1,\ldots,b_{i-1}$, so we have that $\phi>\phi_1>\phi_2>\ldots>\phi_m>\ldots$ as we wanted.
\end{proof}

We generalize this to show
\begin{theorem}
Let $\phi(\bar{x})$ be a first order formula over $\F:=\F(a_1,\ldots, a_n)$. Suppose $\phi(\F)\neq \phi(\F*\F_{\omega})$. Then $\phi(\bar{x})$ is not superstable.
\end{theorem}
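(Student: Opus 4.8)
The plan is to reduce the general case to Proposition \ref{Generic} by extracting, from the assumption $\phi(\F) \neq \phi(\F * \F_\omega)$, a witness that behaves like a generic element. So suppose $\bar{c} \in \phi(\F * \F_\omega) \setminus \phi(\F)$, where $\bar{c} = (c_1, \ldots, c_m)$ is a tuple of elements of $\F * \F_\omega$. Since $\F \prec \F * \F_\omega$ (this is an instance of the elementary chain $\F_n \prec \F_\omega$ recalled in the introduction, after identifying $\F * \F_\omega$ with a free group of countable rank containing $\F$), and since $\phi(\bar{x})$ has parameters only in $\F$, the tuple $\bar{c}$ cannot lie in $\F$; in particular at least one coordinate, say after reindexing $c_1$, involves the $\F_\omega$-part nontrivially. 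The first step is therefore to understand the type of $\bar{c}$ over $\F$ well enough to run the forking argument.

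The key idea is the following. First I would reduce to the case $m = 1$: if $\phi(\bar{x})$ with $\abs{\bar{x}} = m$ distinguishes $\F$ from $\F * \F_\omega$, then fixing all but one coordinate to suitable values in $\F * \F_\omega$ (chosen so that the resulting single-variable formula still separates the two groups — one checks that such a choice exists, else $\phi(\F) = \phi(\F * \F_\omega)$ after all by induction on $m$) produces a one-variable formula $\phi'(x)$ over $\F * \F_\omega$ with $\phi'(\F * \F_\omega) \neq \emptyset$ but, more to the point, containing an element not "coming from $\F$". Then I would argue that $\phi'$, being satisfied by some element $c$ whose type over $\F$ is that of a non-constant element of $\F * \F(x)$, is implied by a generic-type condition: more precisely, I want to find an element $d$ realizing the generic type over the relevant parameters with $\F * \F_\omega \models \phi'(d)$, and then, passing back to a genericity statement, apply Proposition \ref{Generic} (or rather its proof) to the formula $\phi' \land \psi_i$ relative to the parameter set consisting of $\F$ together with the $b_j$'s.

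Concretely, the cleanest route: let $c \in \phi(\F*\F_\omega)\setminus\phi(\F)$ (after the reduction to one variable), and set $P = \F * \F_\omega$. The element $c$ is generic over $\F$ precisely when $P * \F(t) \models \phi(t)$-type conditions fail to pin it down — but what I really need is only that the formula $\phi$ holds of \emph{some} realization of the generic type over $\F \cup \{b_1, \ldots, b_{i-1}\}$ for every $i$. For this I would use that $c \notin \F$ forces $c$ to be, up to the action of $\mathrm{Aut}$ fixing $\F$, "as generic as its syllable structure over $\F$ allows"; then by homogeneity of the free group one transports $c$ to an element of the form prescribed by the $b_k$ sequence, so that the formulas $\phi \land \psi_i$ are each consistent (witnessed by a generic $e_i$ over $\F$, exactly as in the proof of Proposition \ref{Generic}), and conclude $\phi > \phi\land\psi_1 > \phi\land\psi_2 > \cdots$ using Remark \ref{Sub} together with the fact, borrowed from \cite{ThesisSklinos}, that $\psi_i$ forks over $b_1, \ldots, b_{i-1}$, hence over $\F, b_1, \ldots, b_{i-1}$.

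The main obstacle I expect is the reduction/transport step: establishing that $\phi(\F) \neq \phi(\F * \F_\omega)$ genuinely yields a realization whose type over $\F$ is "generic enough" to intersect all the $\psi_i$. A naive tuple $\bar c$ might mix $\F$-elements with $\F_\omega$-elements in a way that is \emph{not} literally generic, so one must either (a) do an induction on $\abs{\bar x}$ to peel off coordinates, using at each stage that a formula separating $\F$ from $\F*\F_\omega$ in $m$ variables yields one in $m-1$ variables after specializing a coordinate, eventually reaching a one-variable formula satisfied by a non-$\F$ element; and then (b) invoke homogeneity of free groups (any two primitive-like elements are related by an automorphism fixing a given free factor) to move that element onto the ray of $b_k$'s, or more robustly, directly check that the formula $\exists \bar{x}\,(\phi(\bar{x}) \land \bar{x} \notin \F)$ being realized implies — via the fact that the generic type is the unique non-forking extension and $\F*\F_\omega$ is connected — that $\phi$ contains a generic element over any finite subset of $\F*\F_\omega$ containing $\F$. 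Making (b) precise, so that genericity holds simultaneously over all the parameter sets $\F, b_1, \ldots, b_{i-1}$ as $i \to \infty$ and not just over $\F$, is where the real work lies; everything after that is the formal chain argument already contained in Proposition \ref{Generic}.
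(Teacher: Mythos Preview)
Your reduction to one variable is fine, but the heart of the plan --- showing that $\phi$ contains a realisation of the generic type over $\F, b_1,\ldots,b_{i-1}$ so that $\phi\land\psi_i$ is consistent --- does not go through. The hypothesis $\phi(\F)\neq\phi(\F*\F_\omega)$ simply does \emph{not} force $\phi$ to be generic. Take $\phi(x):=\exists y\,(x=y^2)$ over $\F$. Then $e_1^2\in\phi(\F*\F_\omega)\setminus\F$, so the hypothesis holds; but $e_{1}$ is not a square, so $\phi$ is not generic. Worse, $\phi\land\psi_1$ is actually \emph{inconsistent}: a common solution would give $b_1=z^2y^2$, and abelianising $b_1=e_1e_2^2$ gives the vector $(1,2,0,\ldots)$, which is not in $2\Z^\omega$. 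So your proposed chain $\phi>\phi\land\psi_1>\cdots$ cannot even start. Neither homogeneity nor connectedness rescues this: an automorphism fixing $\F$ sends a square to a square, and connectedness only says the generic type is unique, not that every formula growing with the model contains it.

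The paper's proof takes a genuinely different route that avoids this obstruction. Instead of intersecting $\phi$ with the $\psi_i$, it writes the witness as a word $c=w(\bar a,e_1)$ and pushes the $\psi_i$ through $w$: one sets
\[
\phi_i(x,b_1,\ldots,b_i)\ :=\ \exists z\,\bigl(x=w(\bar a,z)\ \land\ \psi_i(z)\bigr),
\]
which is consistent because $w(\bar a,e_i)$ realises it. These $\phi_i$ are Diophantine, and the forking of $\phi_{i+1}$ over $\bar a,b_1,\ldots,b_i$ is \emph{not} inherited from Remark~\ref{Sub} alone (there is no implication $\phi_{i+1}\to\psi_{i+1}$ in the right direction). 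Instead the paper argues by contradiction: if $\{\phi_{i+1}(x,b_1,\ldots,b_i,b_{i+m}):m\geq1\}$ were consistent, equationality of Diophantine formulas (Theorem~\ref{Equa}) reduces to a finite intersection, a common solution $c$ gives a Diophantine condition on $x$ which is generic, Lemma~\ref{GenericMerzlyakov} produces a formal solution, and a Kurosh-type free product computation shows this formal solution cannot exist because $w(\bar a,\cdot)\notin\F$. None of equationality, formal solutions, or the free-product algebra appears in your outline, and they are where the actual work lies.
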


\begin{proof} 
Throughout the proof the tuple $\bar{a}$ is always assumed to be part of the parameter set of the various formulas, whether we explicitly mention it or not. 
Without loss of generality $\abs{\bar{x}}=1$. 
We may also assume that $\phi(\F)\neq \phi(\F * \langle e_1 \rangle)$ otherwise we can trivially think of 
$\phi(x)$ as being definable over a bigger subfactor of $\F*\F_{\omega}$. 

Suppose $w(\bar{a},e_{1})\in \phi(\F * \langle e_1 \rangle)\setminus\F$. 
For each $i<\omega$, we consider the following (Diophantine) formula 
$$\phi_i(x, b_1,\ldots , b_i):= \exists z(x=w(\bar{a},z) \land \psi_i(z))$$ 
where $\psi_i$ is the formula defined in the proof of Proposition \ref{Generic}. It is enough to show that $\phi_1>\phi_2>\ldots>\phi_m>\ldots$

We define the word
$\gamma_i(y, b_i) = b_1(b_1^{-1}b_2(b_2^{-1}b_3(\ldots(b_{i-1}^{-1}b_i y^{-2})^{-2})\ldots)^{-2})^{-2})$ 
(it depends on $b_1, \ldots, b_{i-1}$ as well but wo do not make this explicit), so that $\phi_i(x, b_1,\ldots , b_i)$ can be rewritten as
$$\phi_i(x, b_1,\ldots , b_i):=  \; \exists y  \; x= w(\bar{a}, \gamma_i(y, b_i))$$

We fix some $i<\omega$ and we prove that $\phi_{i+1}$ forks over $\bar{a},b_1,\ldots,b_i$. We claim that the (indiscernible) 
sequence $(b_{i+m})_{m\geq1}$ is a witnessing sequence for forking. Suppose not, then $\{\phi_{i+1}(x,b_1,\ldots, b_i, b_{i+m}) \ |\ m \geq 1\}$ is consistent. 
By Theorem \ref{Equa}, $\phi_{i+1}(x,\bar{y})$ is equational so there is some $k<\omega$ such that: 

$$\bigwedge \{\phi_{i+1}(x, b_1,\ldots,b_i,b_{i+m})| m\geq 1\}
= \bigwedge^k_{j=1} \phi_{i+1}(x, b_1\ldots,b_i, b_{i+j})$$

Denote by $\bar{b}$ the tuple $(b_1,\ldots,b_{i+k})$. The finite intersection above is non empty, 
so it contains an element $c$ which we can assume to be in $\F*\F(\bar{b})$. Thus for any $m\geq 1$, there exists $y$ such that $c = w(\bar{a}, \gamma_i(y, b_{i+m}))$.
We consider the word $\Sigma(y,x, \bar{b}) = c^{-1} w(\bar{a}, \gamma_i(y, x))$.

The Diophantine set $\exists y(\Sigma(y,x,\bar{b})=1)$ (over $\F*\F(\bar b)$) is generic since it contains 
for example the element $b_{i+k+1}$ which is generic over $\bar{a}, \bar{b}$. 
Thus, applying Theorem \ref{GenericMerzlyakov} we get a formal solution 
$\tilde{y}(x, \bar{b})$ for $\Sigma$ over $\F * \F(\bar{b})$. 

\ \\ \\
{\it Claim I:} The element $\gamma_i(\tilde{y}(x, \bar{b}), x)$ of $\F * \F(\bar{b})* \F(x)$ does not belong to $\F * \F(\bar{b})$.
\begin{proof}[Proof of Claim I]
Consider the homomorphism $h: \F * \F(\bar{b}) * \F(x) \rightarrow \F(x)$ 
killing $\F * \F(\bar{b})$ and sending $x$ to $x$. We have $h(\tilde{y}) = x^p$ for some $p$, so: 
$$h(\gamma_i(\tilde{y}, x)) = (((\ldots(x x^{-2p})^{-2})\ldots)^{-2})^{-2}$$   
so that $h(\gamma_i(\tilde{y},x)) \neq 1$: in particular, $\gamma_i(\tilde{y},x)$ is not in $\F * \F(\bar{b})$.
\end{proof}
\ \\
{\it Claim II:} The subgroup $H$ of $\F * \F(\bar{b})* \F(x)$ generated by $\F * \F(\bar{b})$ together with $\gamma_i(x,\tilde{y})$ is the free product
$$ \F * \F(\bar{b}) *  \langle \gamma_i(x, \tilde{y}) \rangle$$ 
\begin{proof}[Proof of Claim II]
Note that $H$ is a free group, generated by $\abs{\bar{a}} + \abs{\bar{b}} +1$ elements, and by Kurosh' theorem 
$\F * \F(\bar{b})$ is a free factor of $H$ which by Claim I is proper. 
Thus $H$ is a free group of rank strictly greater than $\abs{\bar{a}} + \abs{\bar{b}} $, which proves the claim. 
\end{proof}

To conclude, by Claim II we see that $c^{-1}w(\bar{a},\gamma_i(x, \tilde{y})) \neq 1$ in $H$, 
since $c^{-1} \in \F * \F(\bar{b})$ and $w(\bar{a},\gamma_i(\tilde{y}, x))\in \F * \langle \gamma_i(\tilde{y}, x) \rangle\setminus \F$ by hypothesis on $w$. 
Thus in particular this is also true in the bigger group
$\F(x) *\F * \F(\bar{b})$. This contradicts the fact that $\tilde{y}$ is a formal solution for this equation. 

% Since $w(e_1,\ldots,e_n,e_{n+1})\in \F_{n+1}\setminus\F_n$, we have that $w(e_1,\ldots,e_n,\gamma_i(x,\pi(y)))$ 
% is an element in $\langle e_1,\ldots,e_n, b_1,\ldots,b_{i+k}\rangle$. Otherwise, if $w(e_1,\ldots,e_n,\gamma_i(x,\pi(y)))$ 
% was in $\langle x, e_1,\ldots,e_n, b_1,\ldots,b_{i+k}\rangle \setminus \langle e_1,\ldots,e_n, b_1,\ldots,b_{i+k}\rangle$, 
% the following morphism $h:\langle x, e_1,\ldots,e_n, b_1,\ldots,b_{i+k}\rangle\rightarrow \langle b_{i+k+1}, e_1,\ldots,e_n, b_1,\ldots,b_{i+k}\rangle$ 
% fixing $\langle e_1,\ldots,e_n, b_1,\ldots,b_{i+k}\rangle$ and sending $x$ to $b_{i+k+1}$ would give $\Sigma$ a non trivial value. 
% But again this can only happen if $\gamma_i(x,\pi(y))$ is in $\langle e_1,\ldots,e_n, b_1,\ldots,b_{i+k}\rangle$. Now we 
% observe that if $\alpha\in \langle x, e_1,\ldots,e_n, b_1,\ldots,b_{i+k}\rangle$, $b\in\langle e_1,\ldots,e_n, b_1,\ldots,b_{i+k}\rangle$
% and $b\cdot\alpha\in\langle e_1,\ldots,e_n, b_1,\ldots,b_{i+k}\rangle$ then $\alpha\in\langle e_1,\ldots,e_n, b_1,\ldots,b_{i+k}\rangle$. 
% This shows that $x\cdot\pi(y)^{-2}$ is in $\langle e_1,\ldots,e_n, b_1,\ldots,b_{i+k}\rangle$, a contradiction.
\end{proof}

\bibliography{biblio}

\end{document}